\providecommand\@dotsep{5}
\def\listtodoname{List of Todos}
\def\listoftodos{\@starttoc{tdo}\listtodoname}
\newcommand{\eps}{\varepsilon}
\newcommand{\la}{\lambda}
\newcommand\erre{\mathbb{R}}
\newcommand\ci{\mathbb{C}}
\newcommand{\rn}{\erre^{n}}
\newcommand{\pa}{\partial}
\numberwithin{equation}{section}
\newtheorem{theorem}{Theorem}[section]
\newtheorem{lemma}[theorem]{Lemma}
\newtheorem{definition}[theorem]{Definition}
\newtheorem{proposition}[theorem]{Proposition}
\newcommand{\R}{{\mathbb R}}
\newcommand{\indentitem}{\setlength\itemindent{18pt}}
\title[Logarithmic Schr\"odinger equation]{On the logarithmic Schr\"odinger equation}
\author[P.\ d'Avenia]{Pietro d'Avenia}
\address{Dipartimento di Meccanica, Matematica e Management,
Politecnico di Bari
\newline\indent
Via Orabona 4,  70125  Bari, Italy}
\email{p.davenia@poliba.it}
\author[E.\ Montefusco]{Eugenio Montefusco}
\address{Dipartimento di Matematica,
{\it Sapienza} Universit\`a di Roma 
\newline\indent
Piazzale Aldo Moro 5, 00185 Roma, Italy}
\email{eugenio.montefusco@uniroma1.it}
\author[M.\ Squassina]{Marco Squassina}
\address{Dipartimento di Informatica,
Universit\`a degli Studi di Verona
\newline\indent
C\'a Vignal 2, Strada Le Grazie 15, 37134 Verona, Italy}
\email{marco.squassina@univr.it}
\thanks{The first and third author were supported by MIUR project:
   ``Variational and Topological Methods in the Study of Nonlinear Phenomena''.
    Moreover, the first author was supported also by 2011 FRA 
     project ``Onde solitarie''.  The second author was supported by MIUR project:
   ``Existence, multiplicity and qualitative properties of solutions of 
   nonlinear elliptic problems''}
\subjclass[2000]{35Q51, 35Q40, 35Q41}
\keywords{Logarithmic Schr\"odinger equation, existence, 
multiplicity, qualitative properties}
\begin{document}

\begin{abstract}
In the framework of the nonsmooth critical point theory for lower 
semi-continuous functionals,
we propose a direct variational approach to investigate the 
existence of infinitely many weak solutions for a class
of semi-linear elliptic equations with logarithmic 
nonlinearity arising in physically relevant situations.
Furthermore, we prove that there exists a unique positive solution which is radially symmetric and nondegenerate.
\end{abstract}

\maketitle


\section{Introduction}
\label{sec.introd}
\noindent
The logarithmic Schr\"odinger equation
\begin{equation}
\label{sch-evolv}
i\pa_{t}\phi +\Delta\phi 
	+\phi\log |\phi|^{2}=0, 
\quad
\phi:[0,\infty)\times\rn\to\ci,
\quad
n\geq 3,
\end{equation}
admits applications to quantum mechanics,
quantum optics, nuclear physics, transport and diffusion 
phenomena, open quantum systems, 
effective quantum gravity, theory of 
superfluidity and Bose-Einstein condensation
(see \cite{Zlosh} and the references therein).
We refer to \cite{cazNA,caz1,cazhar} 
for a study 
of existence and uniqueness of the solutions of the associated Cauchy problem 
in a suitable functional framework as well as to a study of the asymptotic behaviour of its solutions and their orbital stability, 
in the spirit of \cite{cl}, with respect to radial perturbations, of the so 
called Gausson solution (see \cite{birula76}).
In this paper we are interested in the existence, multiplicity
and qualitative properties of the standing waves solution of
\eqref{sch-evolv}, i.e. solution in the form
\(\phi=e^{i\omega t} u(x)\), where \(\omega\in\erre\)
and \(u\) is a  real valued function which has to solve
the following semi-linear elliptic problem
\begin{equation}\label{problema}
-\Delta u +\omega u
	= u\log u^{2},  \qquad
u\in H^1(\rn).
\end{equation}
It is well known (see \cite{birula76,birula79}) that the Gausson $$\mathfrak{g}(x)=e^{-|x|^2/2}$$ solves \eqref{problema} for $\omega=-n$.
We emphasize that if $u$ is a solution of
\eqref{problema}, then $\lambda u$, $\lambda\neq 0$, 
is a solution of
$
-\Delta v + \omega' v= v\log v^2
$
with $\omega'=\omega+\log\lambda^2$. 
This fact
allows us to name the solution $\exp\{(\omega+n-|x|^2)/2\}$,
{\em Gausson} for \eqref{problema}.
Moreover, without loss of generality, we can restrict to the case $\omega>0$, even if our results hold for every $\omega\in\mathbb{R}$. 
From a variational point of view, the search of nontrivial solutions 
to \eqref{problema} can be formally associated with the 
study of critical points of the functional on $H^1(\rn)$ defined by
\begin{equation}
\label{functional}
J(u)=\frac{1}{2}\int |\nabla u|^2 
+\frac{\omega+1}{2}\int u^2-\frac{1}{2}\int u^2 \log u^2.
\end{equation} 
Due to the logarithmic Sobolev inequality  
\begin{equation}
\label{eq:logsob}
\int u^2 \log u^2 \leq \frac{a^2}{\pi}  \|\nabla u\|_2^2 
+  (\log \| u\|_2^2 - n (1+\log a))\| u\|_2^2,
\quad 
\hbox{for }u\in H^1(\rn) \hbox{ and } a>0,
\end{equation}
(see e.g. \cite{LLoss}), we have $J(u)>-\infty$ for all $u\in H^1(\rn)$, 
but there are elements $u\in H^1(\rn)$ such that
$ \int u^2 \log u^2=-\infty $.
Thus, in general, $J$ fails to be finite and 
$C^1$ on $H^1(\rn)$. Due to this loss
of smoothness, in order to study existence 
of solutions to \eqref{problema}, to the best of our knowledge, two indirect approaches
were followed so far in the literature. 
On one hand, in \cite{cazNA}, the idea is
to work on the Banach space
\begin{equation}
\label{Wcaz}
W=\big\{u\in H^1(\rn)\;\vert \int u^2 |\log u^2|<\infty\big\},
\,\,\,
\|u\|_W=\|u\|_{H^1}+\inf\Big\{\gamma>0\;\vert\int A(\gamma^{-1}|u|)\leq 1\Big\},
\end{equation}
where $A(s)=-s^2\log s^2$ on $[0,e^{-3}]$ and $A(s)=3s^2+4 e^{-3}s-e^{-6}$
on $[e^{-3},\infty)$.
In fact, it turns out that, in this framework
$J:W\to\R$ is well defined and $C^1$ smooth (see \cite[Proposition 2.7]{cazNA}).
On the other hand,
in \cite{guerrero}, the authors
penalize the nonlinearity around the origin and try to obtain
a priori estimates to get a nontrivial solution 
at the limit. 
However, the drawback of these indirect approaches, is that
the Palais-Smale condition cannot be obtained, due to a loss 
of coercivity of the functional $J$,
and, in general, no multiplicity result can be obtained 
by the Lusternik-Schnirelmann category theory.
In this paper we introduce a direct approach to study 
the existence of infinitely many 
weak solutions to~\eqref{problema}, in the framework 
of the nonsmooth critical point theory 
developed by Degiovanni-Zani in \cite{DegZan96,DegZan2000} 
(see also~\cite{CamDeg}) for suitable 
classes of lower semi-continuous functionals, and based on 
the notion of weak slope (see \cite{DegMar,CDM}).
In fact, it is easy to see that the functional 
$J:H^1_{{\rm rad}}(\rn)\to\erre\cup\{+\infty\}$ is 
lower semicontinuous (see Proposition~\ref{semicont}) 
and that it satisfies the Palais-Smale condition
in the sense of weak slope (see Proposition~\ref{comp-cps}). 
More precisely, we shall prove the following 
\begin{theorem}
\label{main}
Problem~\eqref{problema} has a sequence of solutions 
$u_k\in H^1_{{\rm rad}}(\rn)$  with $J(u_k)\to+\infty$ as $k\to+\infty$. 
\end{theorem}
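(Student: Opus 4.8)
The plan is to apply the symmetric mountain-pass / Lusternik–Schnirelmann machinery of Degiovanni–Zani for lower semicontinuous, $\Z_2$-invariant functionals to $J$ on $H^1_{\mathrm{rad}}(\rn)$. Because the logarithmic Sobolev inequality only guarantees $J > -\infty$ but not differentiability, I would not work with the classical Palais–Smale condition but instead rely on the two ingredients the excerpt has already promised: that $J : H^1_{\mathrm{rad}}(\rn) \to \erre \cup \{+\infty\}$ is lower semicontinuous (Proposition~\ref{semicont}) and that it satisfies the Palais–Smale condition in the sense of the weak slope (Proposition~\ref{comp-cps}). The even symmetry $J(-u) = J(u)$ is immediate from the form \eqref{functional}, so the functional is $\Z_2$-equivariant and the natural tool is an equivariant min-max scheme.

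First I would verify the abstract hypotheses of the Degiovanni–Zani deformation and critical-point theorems from \cite{DegZan96,DegZan2000}: the lower semicontinuity from Proposition~\ref{semicont}, the compactness condition (Palais–Smale for the weak slope) from Proposition~\ref{comp-cps}, and the evenness. Next I would check the geometric hypotheses needed for an unbounded sequence of critical values. Near the origin I would establish a local linking / mountain-pass geometry: using \eqref{eq:logsob} to control the term $\int u^2 \log u^2$ by $\|\nabla u\|_2^2$ and lower-order terms, one sees that on small spheres in $H^1_{\mathrm{rad}}(\rn)$ the functional $J$ stays bounded below away from zero, giving the required linking structure. To produce infinitely many critical values diverging to $+\infty$, I would introduce an increasing sequence of finite-dimensional subspaces (or symmetric index-$k$ subsets of a sphere) and define min-max levels
\begin{equation*}
c_k = \inf_{A \in \Gamma_k} \sup_{u \in A} J(u),
\end{equation*}
where $\Gamma_k$ is the class of symmetric subsets of $H^1_{\mathrm{rad}}(\rn)$ of Krasnoselskii genus at least $k$ intersecting the relevant sphere.

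The heart of the argument is then twofold. On the compactness side, the equivariant deformation lemma of Degiovanni–Zani, valid for lower semicontinuous functionals satisfying the weak-slope Palais–Smale condition, guarantees that each $c_k$ is a critical value in the weak-slope sense, and that a standard intertwining between the genus and the deformation forces $c_k \to +\infty$ as $k \to +\infty$. On the regularity side, I would translate ``critical point in the weak-slope sense'' into an actual weak solution of \eqref{problema}: this requires showing that a point of vanishing weak slope satisfies the Euler–Lagrange equation distributionally, which for the logarithmic nonlinearity is delicate precisely where $u$ is small, since $u \log u^2 \to 0$ but $\log u^2 \to -\infty$. Finally, passing from radial weak solutions to genuine $H^1(\rn)$ solutions uses the principle of symmetric criticality, adapted to this nonsmooth setting.

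The main obstacle I expect is the compactness step, namely verifying that the candidate min-max levels $c_k$ are genuinely attained as weak-slope critical values with divergence $c_k \to +\infty$, because the singular logarithmic term destroys the usual coercivity and $C^1$-structure. Controlling Palais–Smale sequences in the weak-slope sense — in particular ruling out concentration or vanishing of $\int u^2 \log u^2$ along such sequences — is exactly what Proposition~\ref{comp-cps} is designed to supply, so the proof hinges on feeding that proposition correctly into the Degiovanni–Zani symmetric min-max theorem rather than on any new estimate. A secondary technical point is ensuring that the genus-based lower bounds for $c_k$ grow without bound, which follows from the linking geometry together with the finite-dimensional approximation, but must be checked compatibly with the nonsmooth deformation.
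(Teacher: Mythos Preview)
Your overall strategy coincides with the paper's: apply the Degiovanni--Zani $\Z_2$-equivariant critical-point theorem for lower semicontinuous functionals to $J$ on $H^1_{\mathrm{rad}}(\rn)$, using Propositions~\ref{semicont} and~\ref{comp-cps} for lower semicontinuity and the weak-slope Palais--Smale condition, the logarithmic Sobolev inequality~\eqref{eq:logsob} for the local geometry near the origin, an increasing sequence of finite-dimensional subspaces for the linking, and Proposition~\ref{linkder-pend} to convert weak-slope critical points into weak solutions. So the architecture is right.

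There are, however, two concrete omissions you should fill in. First, the abstract theorem (Theorem~\ref{astratto}) requires not only the weak-slope Palais--Smale condition but also the $(\mathrm{epi})_c$ condition and the equivariant condition $|d_{\Z_2}\mathcal{G}_J|(0,\lambda)\neq 0$ for $\lambda\geq\alpha$; the paper dispatches both by citing \cite[Theorem~3.4]{DegZan2000}, and you should do the same rather than leave them implicit. Second, you never verify the key geometric hypothesis~\eqref{it:G2}: that on each finite-dimensional subspace $V_k$ the functional becomes nonpositive outside a large ball. The paper does this by choosing the $V_k$ to consist of bounded radial functions (e.g.\ eigenfunctions of $-\Delta+|x|^2$), writing $u=\mu w$ with $\|w\|_2=1$, and computing
\[
J(\mu w)=\frac{\mu^2}{2}\Big(\|\nabla w\|_2^2+\omega+1-\log\mu^2-\int w^2\log w^2\Big)\leq \frac{\mu^2}{2}(C-\log\mu^2)\to -\infty,
\]
using equivalence of norms on $V_k$ and the $L^\infty$ bound on $w$. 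Without this step your min-max levels $c_k$ are not shown to be finite, let alone to diverge; once it is in place, the divergence $c_k\to+\infty$ is delivered by the abstract theorem and needs no separate genus argument. Finally, the principle of symmetric criticality is unnecessary here: a radial weak solution of~\eqref{problema} is already a solution in $H^1(\rn)$.
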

\noindent To the best of our knowledge this is the first 
multiplicity result for \eqref{problema} on the
space $H^1(\rn)$ and it is obtained directly 
{\em without penalizing} the functional and 
{\em without changing the topology} of the space. 
It should also be noticed that, due to the behaviour around zero, 
our logarithmic nonlinearity does not fit into the 
framework of the classical papers by Berestycki and 
Lions \cite{BL1,BL2}. 
We also point out that, even without working in the 
restricted space of radial functions $H^1_{{\rm rad}}(\rn)$,
since $J$ decreases 
under polarization of nonnegative functions
of $H^1(\rn)$, we can obtain 
the existence of a Palais-Smale sequence $\{u_k\}\subset H^1(\rn)$
with the additional information that $\|u_k-|u_k|^*\|_{L^{2^*}(\rn)}\to 0$ 
as $h\to\infty$, namely $\{u_k\}$ is {\em almost}
radially symmetric and decreasing (see \cite[Theorem 3.10]{CCM}).\\
In the last section 
we study some qualitative properties of
the solutions of \eqref{problema}.
We are able to prove that the nonnegative solutions are strictly positive and that
they are smooth. 
By exploiting the moving plane method (we outline that our nonlinearity is not $C^1$ in $[0,\infty)$), we show 
\begin{theorem}
\label{thm:mpm}
Up to translations, the Gausson for \eqref{problema} is the unique strictly positive $C^2$-solution  such that $u(x)\to 0$ as $|x|\to \infty$.  
\end{theorem}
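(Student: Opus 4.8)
The plan is to combine the method of moving planes, which forces any admissible solution to be radially symmetric and radially decreasing about some point, with a uniqueness statement for the resulting radial ordinary differential equation. Throughout, write $f(s)=s\log s^2$ and $g(s)=f(s)-\omega s$, so that \eqref{problema} reads $-\Delta u=g(u)$; on $(0,\infty)$ one has $f'(s)=2\log s+2$, smooth but blowing up to $-\infty$ as $s\to 0^{+}$, which is the only point where the nonlinearity fails to be $C^1$. I would first establish the asymptotic behaviour of an admissible solution $u$: since $u$ is continuous, positive and $u(x)\to 0$, it is bounded, and for $|x|$ large $\log u^2$ is very negative, so the equation takes the form $-\Delta u+c(x)u=0$ with $c(x)=\omega-\log u^2\to+\infty$. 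Comparing $u$ (and, after differentiating, $\nabla u$) with suitable supersolutions, I would deduce at least exponential decay of $u$ and $\nabla u$; this control at infinity is what allows the moving plane scheme to start and, crucially, it makes the zeroth-order coefficient below large and positive near infinity.

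Next comes the moving plane method in an arbitrary direction $e_1$. For $\lambda\in\erre$ set $\Sigma_\lambda=\{x_1>\lambda\}$, let $x^\lambda$ be the reflection of $x$ across $\{x_1=\lambda\}$, and put $w_\lambda(x)=u(x^\lambda)-u(x)$. Subtracting the equations satisfied by $u(x^\lambda)$ and $u(x)$ gives $-\Delta w_\lambda+(\omega-c_\lambda(x))w_\lambda=0$ in $\Sigma_\lambda$, where $c_\lambda$ is the difference quotient of $f$ between $u(x)$ and $u(x^\lambda)$. The point that bypasses the singularity of $g$ at the origin is twofold: on any compact set $u$ is bounded below by a positive constant, so there $c_\lambda$ is a bounded smooth quantity and the classical strong maximum principle and Hopf lemma apply; on the set $\{w_\lambda<0\}$ one has the one-sided bound $c_\lambda\le 2\log\|u\|_\infty+2$ (since $f'$ is increasing), while near infinity $c_\lambda\to-\infty$, so $\omega-c_\lambda$ is large and positive there. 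I would start the planes at $\lambda=+\infty$, where $w_\lambda\ge 0$ in $\Sigma_\lambda$ follows from the maximum principle in a neighbourhood of infinity, and decrease $\lambda$ to the critical value $\lambda_0=\inf\{\lambda : w_\mu\ge 0 \text{ in } \Sigma_\mu \text{ for all } \mu\ge\lambda\}$. A standard continuation argument — using the strong maximum principle to get $w_{\lambda_0}>0$ unless $w_{\lambda_0}\equiv 0$, and the decay of $u$ to prevent pushing the plane past $\lambda_0$ — yields $w_{\lambda_0}\equiv 0$, i.e.\ symmetry about $\{x_1=\lambda_0\}$ together with $\pa_{x_1}u<0$ for $x_1>\lambda_0$. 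As $e_1$ is arbitrary, $u$ is radially symmetric about a point $x_0$ and strictly radially decreasing.

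After the translation sending $x_0$ to the origin, $u=u(r)$ solves
\[
-u''-\frac{n-1}{r}u'+\omega u=u\log u^2,\qquad u'(0)=0,\ u>0,\ u(r)\to 0,
\]
and it remains to prove that this problem has a unique solution, necessarily the Gausson $U(x)=\exp\{(\omega+n-|x|^2)/2\}$, which one checks directly solves \eqref{problema}. Here I would exploit the favourable structure of the nonlinearity: $g(s)/s=2\log s-\omega$ is strictly increasing and vanishes exactly once, at $s=e^{\omega/2}$, which places the problem within the classical uniqueness theory for positive radial decaying solutions of $-\Delta u=g(u)$ (in the spirit of Serrin--Tang and Kwong). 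Equivalently, one may argue by shooting: the decaying trajectory is selected by its height $u(0)$, and monotone dependence of the trajectory on $u(0)$, combined with the decay established in the first step, forces $u(0)=U(0)=e^{(\omega+n)/2}$, whence $u\equiv U$.

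I expect the main obstacle to be the rigorous treatment of the moving plane step in the presence of the singular nonlinearity. One must carefully localize, invoking the strong maximum principle and the Hopf boundary lemma only on regions where $u$ is bounded away from zero, hence where $g$ is genuinely smooth, while separately exploiting the sign of the unbounded coefficient $\omega-c_\lambda$ near infinity, where the decay estimates must do all the work. A secondary delicate point is making the shooting/comparison argument for the radial equation fully rigorous at the singular endpoint $u=0$.
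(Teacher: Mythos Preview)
Your proposal is correct and follows the same two-step strategy as the paper: moving planes to obtain radial symmetry, then the Serrin--Tang uniqueness result for the radial problem. The execution differs in one useful respect. You anticipate needing preliminary exponential decay of $u$ and $\nabla u$, and you plan to localize the maximum principle and Hopf lemma to regions where $u$ is bounded away from zero so that the nonlinearity is genuinely $C^1$. The paper avoids both steps by a single observation: writing the difference equation as $-\Delta w_\lambda+c_\lambda(x)w_\lambda=0$ with $c_\lambda(x)=-\int_0^1\bigl(2-\omega+\log(\sigma u_\lambda+(1-\sigma)u)^2\bigr)\,d\sigma$, one sees that $c_\lambda$ is \emph{bounded from below} on all of $\rn$ (it may blow up to $+\infty$ where $u$ is small, but never to $-\infty$, since $u$ is bounded). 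This is exactly the hypothesis under which the strong maximum principle and Hopf lemma of Chen--Li apply, so no localization is needed; and the starting position of the planes is obtained by a direct minimum-point argument in $B_R^c(0)$, using only $u\to 0$ rather than any quantitative decay. Your route works, but the paper's is shorter.
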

\noindent Then we get that the first solution $u_1$ in Theorem \ref{main} is the Gausson for \eqref{problema}. Moreover
we prove
\begin{theorem}
\label{main2}
The Gausson $\mathfrak{g}$ is nondegenerate, that is 
$\operatorname{Ker}(L)=\operatorname{span}
\big\{\partial_{x_h}\mathfrak{g}\big\}$,
where $Lu=-\Delta u + ( |x|^2 -n-2) u$ is the linearized 
operator for $-\Delta u  - n u = u \log u^2$ at $\mathfrak{g}$.
\end{theorem}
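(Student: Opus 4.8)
The plan is to recognize $L=-\Delta+|x|^2-(n+2)$ as the quantum harmonic oscillator Hamiltonian $H=-\Delta+|x|^2$ shifted by the constant $n+2$, and to read off its kernel from the explicitly known spectral decomposition of $H$. Since the confining potential $|x|^2\to+\infty$ as $|x|\to\infty$, the operator $H$, taken with its natural form domain $\{v\in\huno(\rn):|x|v\in\ldue\}$, is self-adjoint, bounded below, and has compact resolvent; hence its spectrum is purely discrete. The first thing I would record is the easy inclusion $\operatorname{span}\{\partial_{x_h}\mathfrak{g}\}\subseteq\operatorname{Ker}(L)$: differentiating the identity $-\Delta\mathfrak{g}-n\mathfrak{g}=\mathfrak{g}\log\mathfrak{g}^2$ with respect to $x_h$ (equivalently, using the translation invariance of the equation) gives $L(\partial_{x_h}\mathfrak{g})=0$, and the $n$ functions $\partial_{x_h}\mathfrak{g}=-x_h e^{-|x|^2/2}$ are linearly independent.

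For the reverse inclusion I would invoke the classical spectrum of $H$. Writing $H=\sum_{i=1}^n(-\partial_{x_i}^2+x_i^2)$ as a sum of commuting one-dimensional oscillators and using the Hermite basis, the eigenvalues of $H$ are $n+2k$, $k=0,1,2,\dots$, with the $k$-th eigenspace spanned by the Hermite products $\prod_i h_{\alpha_i}(x_i)$, $|\alpha|=k$. Consequently $L=H-(n+2)$ has eigenvalues $2(k-1)$, so that $\operatorname{Ker}(L)$ is exactly the first excited eigenspace ($k=1$) of $H$. That eigenspace is $n$-dimensional and spanned by $\{x_h\,e^{-|x|^2/2}\}_{h=1}^n$, because $h_1(t)\propto t\,e^{-t^2/2}$ while $h_0(t)\propto e^{-t^2/2}$. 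Since $x_h e^{-|x|^2/2}=-\partial_{x_h}\mathfrak{g}$, the two spans coincide and the theorem follows. (Note that the scaling direction $\mathfrak{g}$ itself is the ground state, with eigenvalue $-2\neq 0$, consistent with the fact that dilations change $\omega$ and hence do not produce zero modes.)

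The one point requiring care, and the main obstacle, is ensuring that the kernel is computed in the right space, so that the $\ldue$ spectral theory of $H$ genuinely applies to every candidate zero mode. Concretely, I would argue that any weak solution $v$ of $Lv=0$ is smooth by elliptic regularity and, because the coefficient $|x|^2-n-2$ grows at infinity, decays faster than any polynomial; this places $v$ in the form domain of $H$ and identifies it with an $\ldue$-eigenfunction at eigenvalue $n+2$. The decay can be obtained by a comparison/Agmon-type argument applied to $-\Delta v=(n+2-|x|^2)v$ outside a large ball, where the right-hand side coefficient is negative and forces the solution to decay at the Gaussian rate $e^{-|x|^2/2}$. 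Once this a priori regularity and decay are established, the inclusion $\operatorname{Ker}(L)\subseteq E_1$ is immediate from the orthogonal Hermite decomposition of $\ldue(\rn)$, and no further computation is needed.
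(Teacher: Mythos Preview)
Your argument is correct and is genuinely different from the paper's. The paper follows the standard nondegeneracy template (as in \cite{AM,Ceta}): it expands $u$ in spherical harmonics, reduces $Lu=0$ to the family of radial ODEs
\[
A_k(\psi)=-\psi''-\frac{n-1}{r}\psi'+\Big(r^2+\frac{\lambda_k}{r^2}-n-2\Big)\psi=0,
\]
and treats the three cases $k=0$, $k=1$, $k\geq 2$ by hand (first eigenvalue of $A_0$, reduction of order for $A_1$, and positivity of $A_k=A_1+\delta_k/r^2$ for $k\geq 2$). You instead observe that $L=H-(n+2)$ with $H=-\Delta+|x|^2$ the harmonic oscillator, whose spectrum $\{n+2k:k\geq 0\}$ and Hermite eigenbasis are explicit; the kernel of $L$ is then exactly the $k=1$ eigenspace $\operatorname{span}\{x_h e^{-|x|^2/2}\}=\operatorname{span}\{\partial_{x_h}\mathfrak{g}\}$. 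Your route is shorter and more conceptual precisely because this particular linearized operator happens to be exactly solvable; the paper's spherical-harmonic scheme is longer but is the one that transfers to ground states whose linearization is not a textbook Schr\"odinger operator.

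One small remark on your last paragraph: the Agmon-type decay is more than you need. Since the paper takes $L$ on $L^2(\rn)$ with domain $H^2(\rn)$, if $v\in H^2(\rn)$ solves $Lv=0$ then $|x|^2 v=(n+2)v+\Delta v\in L^2(\rn)$ automatically, so $v$ already lies in the natural domain of $H$ and the Hermite expansion applies without any further a priori estimate.
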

\noindent Finally, in Theorem~\ref{biject}, we also obtain a variational characterization of ground state solutions
(namely minima of $J$ on the Nehari manifold) of the 
problem as minima on the $L^2$-sphere. 
We believe that the nondegeneracy of $\mathfrak{g}$ and the  connection between the minimization on the Nehari manifold and on the $L^2$-sphere  can be 
useful in the study of the stability properties
of the logarithmic Schr\"odinger equation~\eqref{sch-evolv}, 
possibly in presence of an external driving potential (see e.g. \cite{BJ}).

\subsection*{Notations} 
\begin{enumerate}
\item $L^\infty_c(\rn)$ denotes the space of functions in $L^\infty(\rn)$ with compact support;
\item $H^1_{\rm rad}(\rn)$ denotes the space of $H^1(\rn)$ functions that are radially symmetric;
\item $C$ denotes a generic positive constant which can changes from line to line.
\end{enumerate}

\section{The multiplicity result}
\noindent
The aim of this section is to prove Theorem \ref{main}.

\subsection{Recalls of nonsmooth critical point theory}
Let us recall some notions useful in the following. 
For a more complete treatment of these arguments 
we refer the reader to \cite{CamDeg,DegMar,DegZan2000}.
Let $(X,\|\cdot\|_X)$ be a Banach space and 
$f:X\to\bar{\mathbb{R}}$ be a function. We consider
\[
\operatorname{epi}(f)=\left\{ (x,\lambda)
\in X\times\mathbb{R}\; \vert \; f(x)\leq\lambda\right\}
\]
endowed with the metric induced by the norm 
$\|\cdot\|_{X\times\mathbb{R}}=(\|\cdot\|_X^2 + |\cdot|^2)^{1/2}$ of $X\times{\mathbb R}$
and we denote with $B_\delta(x,\lambda)$ the open ball of 
center $(x,\lambda)$ and radius $\delta>0$.
Moreover we give the following definitions.
First we give the notion of weak slope for continuous functions.
\begin{definition}
Let $f:X\to\mathbb{R}$ be continuous. For every $x\in X$, we denote 
$|df|(x)$ the supremum of the $\sigma$'s 
in  $[0,+\infty[$ such that there exist $\delta>0$ and a continuous map
$\mathcal{H}:B_\delta(x)\times [0,\delta] \to X$,
satisfying
\[
\| \mathcal{H}(w,t) - w\|_{X} \leq t,
\qquad
f(\mathcal{H}(w,t))\leq f(w) - \sigma t,
\]
whenever $w\in B_\delta(x)$ and $t\in [0,\delta]$.
The extended real number $|df|(x)$ is 
called the {\em weak slope} of $f$ at $x$.
\end{definition}
\noindent Now, let us consider the function $\mathcal{G}_f:= (x,\lambda)\in \operatorname{epi}(f) \mapsto \lambda\in\mathbb{R}$. The function  $\mathcal{G}_f$ is continuous and Lipschitzian of 
constant $1$ and it allows to generalize the notion of weak slope for non-continuous functions $f$ as follows.
\begin{definition}
For all $x\in X$ with $f(x)\in\mathbb{R}$ 
\begin{equation*}
|df|(x):=
\begin{cases}
\frac{|d\mathcal{G}_f|(x,f(x))}
{\sqrt{1-|d\mathcal{G}_f|(x,f(x))^2}}
&
\hbox{if } |d\mathcal{G}_f|(x,f(x))<1,\\
+\infty &
\hbox{if } |d\mathcal{G}_f|(x,f(x))=1. 
\end{cases}
\end{equation*}
\end{definition}
\noindent We also need the following
\begin{definition}
Let $c\in\mathbb{R}$. The function $f$ satisfies (epi)$_c$ condition if there exists $\varepsilon>0$ such that
\[
\inf\{|d\mathcal{G}_f|(x,\lambda)\;\vert\; f(x)<\lambda,|\lambda-c|<\varepsilon\}>0.
\]
\end{definition}
\begin{definition}
$x\in X$ is a (lower) critical point of $f$ 
if $f(x)\in\mathbb{R}$ and $|df|(x)=0$.
\end{definition}
\begin{definition}
Let $c\in\mathbb{R}$. A sequence $\{x_k\}\subset X$ 
is a Palais-Smale 
sequence for $f$ at level $c$ if $f(x_k)\to c$ and
$|df|(x_k) \to 0$.
Moreover $f$ satisfies the Palais-Smale 
condition at level $c$ if every Palais-Smale 
sequence for $f$ at level $c$ admits a 
convergent subsequence in $X$. 
\end{definition}
\begin{definition}
Let $f$ be even with $f(0)\in\mathbb{R}$. For every $\lambda\geq f(0)$, we denote 
$|d_{\mathbb{Z}_2}\mathcal{G}_f|(0,\lambda)$ the supremum of the $\sigma$'s 
in  $[0,+\infty[$ such that there exist $\delta>0$ and a continuous map
$\mathcal{H}=(\mathcal{H}_1,\mathcal{H}_2):(B_\delta(0,\lambda)\cap\operatorname{epi}(f))\times [0,\delta] \to \operatorname{epi}(f)$,
satisfying
\[
\| \mathcal{H}((w,\mu),t) - (w,\mu)\|_{X\times\mathbb{R}} \leq t,
\qquad
\mathcal{H}_2((w,\mu),t)\leq \mu - \sigma t,
\qquad
\mathcal{H}_1((-w,\mu),t)=-\mathcal{H}_1((w,\mu),t),
\]
whenever $(w,\mu)\in B_\delta(0,\lambda)\cap\operatorname{epi}(f)$ 
and $t\in [0,\delta]$.
\end{definition}
\noindent We will apply the following abstract result (see \cite{DegZan2000}).
\begin{theorem}
\label{astratto}
Let $X$ be a Banach space 
and $f:X\to\bar{\mathbb{R}}$ a lower semicontinuous even functional.
Assume that $f(0)=0$ and there exists a  strictly increasing sequence $\{V_k\}$ of finite-dimensional subspaces of $X$ with the following properties:
\begin{enumerate}[label={\em(GH\arabic*)},ref=GH\arabic*]
\item \label{it:G1} there exist a closed subspace $Z$ of $X$, $\rho >0$ and $\alpha>0$ such that $X=V_0\oplus Z$ and for every $x\in Z$ with $\|x\|_X =\rho$, $f(x)\geq\alpha$;
\item \label{it:G2} there exists a sequence $\{R_k\}\subset]\rho,+\infty[$ such that for any $x\in V_k$ with $\|x\|_X \geq R_k$, $f(x)\leq 0$.
\end{enumerate}
Moreover, assume that
\begin{enumerate}[label={\em(PSH)},ref=PSH]
\item \label{it:PS} for every $c\geq\alpha$, the function $f$ satisfies the Palais-Smale condition at level $c$ and (epi)$_c$ condition;
\end{enumerate} 
\begin{enumerate}[label={\em(WSH)},ref=WSH]
\item \label{it:WSH} $|d_{\mathbb{Z}_2}\mathcal{G}_f|(0,\lambda)\neq 0$, whenever $\lambda\geq\alpha$.
\end{enumerate} 
Then there exists a sequence $\{x_k\}$ of critical points of $f$ such that $f(x_k)\to+\infty$.
\end{theorem}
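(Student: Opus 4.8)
The plan is to read Theorem~\ref{astratto} as a $\mathbb{Z}_2$-symmetric mountain pass theorem and to run a genus based minimax argument not on $f$ directly but on the $1$-Lipschitz function $\mathcal{G}_f(x,\lambda)=\lambda$ over the metric space $\operatorname{epi}(f)$, equipped with the involution $(x,\lambda)\mapsto(-x,\lambda)$; passing to $\mathcal{G}_f$ is what turns the merely lower semicontinuous $f$ into a continuous object amenable to deformation. I would fix the Krasnoselskii genus $\gamma$ together with, following Benci, a pseudo-index $\gamma^*$ relative to the cylinder over the sphere $S=\{x\in Z:\|x\|_X=\rho\}$ appearing in (\ref{it:G1}). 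The candidate critical values are
\[
c_k=\inf_{\gamma^*(A)\ge k}\ \sup_{(x,\lambda)\in A}\lambda,
\]
the infimum running over bounded symmetric subsets $A\subset\operatorname{epi}(f)$ with $\gamma^*(A)\ge k$. The theorem then reduces to four assertions: each $c_k$ is finite, $c_k\ge\alpha$, $c_k$ is a critical value, and $c_k\to+\infty$.

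The geometric hypotheses account for the first three. Finiteness is built in, since the competitors are bounded in $\operatorname{epi}(f)$, whence $\sup_A\lambda<+\infty$; condition (\ref{it:G2}) is precisely what allows me to realise index-$k$ competitors as bounded lifts of the compact caps $\ov{B_{R_k}}\cap V_k$, on whose spherical boundary $f\le 0$. The lower bound $c_k\ge\alpha$ follows from (\ref{it:G1}) and the intersection property of the genus: every admissible $A$ must meet the cylinder over $S$, where $f\ge\alpha$, so that $\sup_A\lambda\ge\alpha$. In particular all the $c_k$ lie above the level $f(0)=0$, and the monotonicity $c_k\le c_{k+1}$ is built into the pseudo-index.

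The analytic heart is a $\mathbb{Z}_2$-equivariant deformation lemma for the nonsmooth $f$, which I would take from the Degiovanni--Zani theory applied to $\mathcal{G}_f$ on $\operatorname{epi}(f)$: at any noncritical level $c\ge\alpha$, the Palais--Smale condition in the weak slope sense together with the (epi)$_c$ condition of (\ref{it:PS}) yields an odd continuous deformation pushing $\{\mathcal{G}_f\le c+\eps\}$ into $\{\mathcal{G}_f\le c-\eps\}$ off a neighbourhood of the critical set. The only fixed point of the involution is the origin, and this is exactly where (\ref{it:WSH}) enters: since $|d_{\mathbb{Z}_2}\mathcal{G}_f|(0,\lambda)\ne 0$ for every $\lambda\ge\alpha$ and all the $c_k\ge\alpha$, the origin cannot obstruct the symmetric flow at the relevant heights. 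Granting this lemma, the standard contradiction argument shows that each $c_k$ is critical: were $c_k$ regular, an almost optimal symmetric set of pseudo-index $\ge k$ could be deformed strictly below $c_k$ without lowering its pseudo-index, contradicting the definition of $c_k$.

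The step I expect to be the main obstacle is the divergence $c_k\to+\infty$, the real content being to transport the genus bookkeeping faithfully into the epigraph framework while preserving the symmetry. I would argue by contradiction: if $c_k\nearrow\bar c<+\infty$, then the critical set $K_{\bar c}=\{x:|df|(x)=0,\ f(x)=\bar c\}$ is compact by the Palais--Smale condition and omits $0$ (as $\bar c\ge\alpha>0=f(0)$), hence has finite genus $\bar\gamma$. Excising a small symmetric neighbourhood of $K_{\bar c}$ lowers the pseudo-index by at most $\bar\gamma$, and the equivariant deformation lemma then drives an almost optimal set of pseudo-index $\ge k+\bar\gamma+1$ below the level $\bar c-\eps$, forcing $c_{k+1}\le\bar c-\eps$ for all large $k$ and contradicting $c_k\to\bar c$. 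Therefore $\{c_k\}$ is unbounded, and the corresponding critical points $x_k$ satisfy $f(x_k)=c_k\to+\infty$, which is the conclusion of Theorem~\ref{astratto}.
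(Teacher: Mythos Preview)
The paper does not prove Theorem~\ref{astratto}; it is stated without proof and attributed to Degiovanni--Zani \cite{DegZan2000}. Your sketch is a faithful outline of the argument one finds in that reference---genus/pseudo-index minimax carried out on $\mathcal{G}_f$ over $\operatorname{epi}(f)$, an equivariant deformation lemma supplied by the weak-slope theory under (PSH), with the (epi)$_c$ condition bridging the gap between sublevels of $\mathcal{G}_f$ and of $f$ and (WSH) disposing of the fixed point of the involution---so there is nothing in the present paper to compare your proposal against beyond the citation itself.
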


\subsection{The Palais-Smale condition}
In this subsection we prove the properties of the functional $J$ that will be useful in the last part of section.
First we establish the relation between the weak slope of the functional $J$ and its directional derivatives (along {\em admissible} directions). 
In the following, we shall denote by $g$ and $G$ 
the extensions by continuity of the functions $s \log s^2$ and  $s^2 \log s^2$ respectively and
$G_1$ and $G_2$ the continuous functions
$$
G_1(s):=(s^2\log s^2)^-
\quad
\text{and}
\quad 
G_2(s):=(s^2\log s^2 )^+.
$$
Observe that, if $u\in H^1_{\rm loc}(\rn)$, then for every $v\in H^1(\rn) \cap L^\infty_c(\rn)$, $g(u)v\in L^1(\rn)$, since
\[
\int |g(u)v| 
\leq  C \Big(1 + \int_{{\operatorname{supp} v}\cap\{|u|> 1\}} |u|^{1+\delta}\Big) < +\infty,
\quad\text{for some $\delta\in(0, 2^*-1]$,}
\]
and so, in particular, $g(u)\in L^1_{{\rm loc}}(\rn)$.
\noindent
If $u\in H^1(\rn)$ and $v\in H^1(\rn) \cap L^\infty_c(\rn)$, we can consider
\begin{equation}
\label{equat-var}
\langle J'(u),v\rangle:= \int \nabla u\cdot\nabla v 
+ \omega \int uv -  \int uv\log u^2.
\end{equation}


\noindent
We have the following 
\begin{proposition}
\label{linkder-pend}
Let $u\in H^1(\rn)$ with $J(u)\in\mathbb{R}$ and $|dJ|(u)<+\infty$. 
Then the following facts hold:
\begin{enumerate}
\item \label{duality} $g(u)\in L^1_{{\rm loc}}(\rn)\cap H^{-1}(\rn)$ and for any $v\in H^1(\rn) \cap L^\infty_c(\rn)$, we have \begin{equation}
\label{eq:connection}
|\langle J'(u),v\rangle|\leq |dJ|(u) \|v\|;
\end{equation}
\item \label{tests} if $v\in H^1(\rn)$ is such that $(g(u)v)^+\in L^1(\rn)$ or $(g(u)v)^-\in L^1(\rn)$, 
then $g(u)v\in L^1(\rn)$ and identity \eqref{equat-var} holds, identifying $J'(u)$ as an element in $H^{-1}(\rn)$.
\end{enumerate}
\end{proposition}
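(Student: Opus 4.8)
The plan is to reduce the metric notion of weak slope to the analytic directional derivative \eqref{equat-var} by exploiting a convex/smooth splitting of $J$. Writing $-\tfrac12\int u^2\log u^2=\tfrac12\int G_1(u)-\tfrac12\int G_2(u)$, I would set $A(u)=\tfrac12\int|\nabla u|^2+\tfrac12\int G_1(u)+\tfrac{c}{2}\int u^2$ and $B(u)=\tfrac{\omega+1-c}{2}\int u^2-\tfrac12\int G_2(u)$, so that $J=A+B$. Since $G_1$ is globally Lipschitz with second derivative bounded below, for $c$ large enough the integrand defining $A$ is convex, whence $A$ is convex and, by Fatou, lower semicontinuous on $H^1(\rn)$; while $B$ is of class $C^1$ because $G_2(s)\leq C|s|^{2+\delta}$ with $2+\delta\leq 2^*$. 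This splitting is the structural observation that unlocks everything.

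First I would show that for every $v\in H^1(\rn)\cap L^\infty_c(\rn)$ the map $t\mapsto J(u+tv)$ is real-valued for small $t$ and that $\lim_{t\to0^+}\tfrac{1}{t}\big(J(u+tv)-J(u)\big)=\langle J'(u),v\rangle$. The quadratic terms are trivially differentiable; for the logarithmic term I would observe that $(u+tv)^2\log(u+tv)^2-u^2\log u^2$ vanishes outside $K:=\operatorname{supp}v$ and, on $K$, the difference quotient equals $\tfrac1t\int_0^t G'(u+sv)\,v\,ds$, with $|G'(u+sv)\,v|\leq C\|v\|_\infty\big(1+(|u|+\|v\|_\infty)^{1+\delta}\big)\in L^1(K)$ since $u\in L^{2^*}(K)$ and $K$ is bounded. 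Dominated convergence, together with the continuity of $g$ at $0$, then yields the limit $\int\nabla u\cdot\nabla v+\omega\int uv-\int g(u)v$.

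The main obstacle is the passage from this derivative to the bound $|\langle J'(u),v\rangle|\leq|dJ|(u)\|v\|$. I would invoke the weak-slope calculus for a convex-plus-$C^1$ functional: for $J=A+B$ one has $|dJ|(u)=\operatorname{dist}_{(H^1)^*}\!\big(-B'(u),\partial A(u)\big)$, with $\partial A$ the convex subdifferential; since $|dJ|(u)<+\infty$, this forces $\partial A(u)\neq\emptyset$ and produces $w^*\in\partial A(u)$ with $\|w^*+B'(u)\|=|dJ|(u)$. Because the directional derivative of the convex functional $A$ along $\pm v$ exists and is \emph{linear} in $v\in L^\infty_c(\rn)$ (by the computation of the previous paragraph applied to $A$), the identities $A'(u;v)=\sup_{\xi\in\partial A(u)}\langle\xi,v\rangle$ and $A'(u;-v)=-\inf_{\xi\in\partial A(u)}\langle\xi,v\rangle$ force \emph{every} subgradient to coincide with it on $H^1(\rn)\cap L^\infty_c(\rn)$; hence $\langle J'(u),v\rangle=\langle w^*+B'(u),v\rangle$ there, and Cauchy--Schwarz gives $|\langle J'(u),v\rangle|\leq|dJ|(u)\|v\|$. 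Equivalently, and more hands-on, one builds an admissible deformation on $\operatorname{epi}(J)$ moving in the direction $\mp v$; the required \emph{uniform} descent rate for points near $u$ is exactly what the convexity of $A$ and the $C^1$ regularity of $B$ supply, and securing this uniformity is the delicate point.

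From this bound, since $v\mapsto\int\nabla u\cdot\nabla v+\omega\int uv=\langle-\Delta u+\omega u,v\rangle$ lies in $H^{-1}(\rn)$, the functional $v\mapsto\int g(u)v$ is bounded on the dense subspace $H^1(\rn)\cap L^\infty_c(\rn)$ and so extends to $H^{-1}(\rn)$; as $g(u)\in L^1_{\rm loc}(\rn)$ already agrees with it on test functions, $g(u)\in H^{-1}(\rn)$, which proves \ref{duality}. For \ref{tests} I would truncate: given $v\in H^1(\rn)$ with, say, $(g(u)v)^+\in L^1(\rn)$, take $v_k\in H^1(\rn)\cap L^\infty_c(\rn)$ with $v_k\to v$ in $H^1(\rn)$ and $|v_k|\leq|v|$. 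The gradient and mass terms pass to the limit, while $\langle g(u),v_k\rangle\to\langle g(u),v\rangle$ in the $H^{-1}$--$H^1$ duality; comparing with $\int g(u)v_k$ and using the one-sided integrability through Fatou identifies the finite limit with $\int g(u)v$, so $g(u)v\in L^1(\rn)$ and \eqref{equat-var} holds.
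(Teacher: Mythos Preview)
Your approach is different from the paper's but essentially sound, with one technical wrinkle to flag.

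\medskip
\noindent\textbf{Comparison.} The paper does \emph{not} split into convex~$+$~$C^1$. It writes $J=T+Q$ with $T(u)=\tfrac12\|\nabla u\|_2^2+\tfrac{\omega+1}{2}\|u\|_2^2$ smooth and $Q(u)=-\tfrac12\int u^2\log u^2$, and then invokes the abstract subdifferential calculus of Campa--Degiovanni: \cite[Theorem~4.13]{CamDeg} gives $\partial J(u)\neq\emptyset$ and $|dJ|(u)\geq\min\{\|\alpha\|_*:\alpha\in\partial J(u)\}$; \cite[Corollary~5.3]{CamDeg} yields $\partial J(u)\subset\partial T(u)+\partial Q(u)$; and \cite[Theorem~3.1(b)]{DegZan2000} (a result tailored to integral functionals with one-sided growth) identifies $\partial Q(u)=\{-u-g(u)\}$. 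Since $\partial T(u)$ is also a singleton, $\partial J(u)=\{J'(u)\}$ and \eqref{eq:connection} follows. Part~\eqref{tests} is obtained by a bare citation of Brezis--Browder~\cite{BrezBrow}. Your route, by contrast, is more hands-on: you compute the directional derivative along $v\in H^1\cap L^\infty_c$ explicitly, use the convex~$+$~$C^1$ weak-slope formula to produce a subgradient realizing $|dJ|(u)$, and exploit linearity of the directional derivative to pin every subgradient on the dense subspace. For part~\eqref{tests} you essentially reprove Brezis--Browder by truncation and Fatou. Your argument is more self-contained; the paper's is shorter because it outsources the work to \cite{CamDeg,DegZan2000,BrezBrow}.

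\medskip
\noindent\textbf{One genuine gap.} Your claim that $B$ is $C^1$ is not quite right as stated: $G_2(s)=(s^2\log s^2)^+$ has a corner at $|s|=1$ (left derivative $0$, right derivative $2$), so the Nemytskii map $u\mapsto\int G_2(u)$ is not $C^1$ on $H^1(\rn)$ in general. The growth bound $G_2(s)\leq C|s|^{2+\delta}$ you invoke gives finiteness and continuity, not differentiability. This is easy to repair: replace the hard cutoff at $|s|=1$ by a smooth transition (exactly as in Cazenave's splitting~\eqref{Wcaz}, where $A$ is modified on $[e^{-3},\infty)$ to be a quadratic) so that both pieces are $C^1$ with subcritical derivative growth, while $G_1+cs^2$ remains convex for $c$ large. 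With that adjustment your scheme goes through.
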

\begin{proof}
Recalling the notion of subdifferential in \cite{CamDeg} and, by \cite[Theorem 4.13]{CamDeg}, we have $\partial J(u)\not=\emptyset$
and $|dJ|(u)\geq \min\{\|\alpha\|_*\;\vert\;\alpha \in \partial J(u)\}$ where $\|\cdot\|_*$ is the norm in $H^{-1}(\rn)$.
Now let
\[
T(u)=\dfrac{1}{2}\|\nabla u\|_2^2 + \frac{\omega + 1}{2} \| u\|_2^2 
\quad
\hbox{ and }
\quad
Q(u)= -\frac{1}{2}\int u^2 \log u^2.
\]
By \cite[Corollary 5.3]{CamDeg}
we have $\partial J(u) \subset \partial T(u) +\partial Q(u)$
and, since $\partial J(u)\not=\emptyset$, then $\partial Q(u)$ is nonempty too. Hence, in light of \cite[(b) of Theorem 3.1]{DegZan2000}, we get that $-u-g(u)\in L^1_{{\rm loc}}(\rn)\cap H^{-1}(\rn)$, and then $g(u)\in L^1_{{\rm loc}}(\rn)\cap H^{-1}(\rn)$,
and $\partial Q(u)=\{-u \log u^2- u\}$.
Thus, taking into account that $\partial J(u)=\{J'(u)\}$, with $J'(u)$ as in \eqref{equat-var}, we get
\eqref{eq:connection}.
Assertion \eqref{tests} follows by the result of \cite{BrezBrow}.
\end{proof}

\begin{proposition}
\label{semicont}
The functional $J$ is lower semicontinuous.

\end{proposition}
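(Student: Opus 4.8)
The plan is to prove that $J$ is lower semicontinuous with respect to strong convergence in $H^1(\rn)$ by splitting the logarithmic term into its positive and negative parts and treating them separately: one part will turn out to be continuous, the other merely lower semicontinuous, and the sum of a continuous and a lower semicontinuous functional is lower semicontinuous. Concretely, write $J(u)=T(u)-\tfrac12\int G_2(u)+\tfrac12\int G_1(u)$, where $T(u)=\tfrac12\|\nabla u\|_2^2+\tfrac{\omega+1}{2}\|u\|_2^2$ and we have used $u^2\log u^2=G_2(u)-G_1(u)$ with $G_1,G_2\ge 0$ the functions introduced above. The functional $T$ is plainly continuous on $H^1(\rn)$, so the whole matter reduces to the two integral terms.

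For the term $\int G_2(u)$ I would first record the growth estimate $0\le G_2(s)\le C_\delta|s|^{2+\delta}$, valid because $G_2$ vanishes on $\{|s|\le 1\}$ while on $\{|s|>1\}$ one has $\log s^2\le C_\delta|s|^\delta$; here $\delta>0$ is chosen so small that $2+\delta<2^*$. Since $H^1(\rn)\hookrightarrow L^{2+\delta}(\rn)$ and strong $H^1$ convergence implies strong $L^{2+\delta}$ convergence (by interpolation between $L^2$ and $L^{2^*}$), the strictly subcritical growth makes the Nemytskii operator $u\mapsto G_2(u)$ continuous from $L^{2+\delta}(\rn)$ into $L^1(\rn)$. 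Hence $u\mapsto\int G_2(u)$ is continuous on $H^1(\rn)$; in particular this term is finite for every $u$.

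For the term $\int G_1(u)$—the one that may equal $+\infty$, and hence the reason $J$ is only extended-real-valued—I would use Fatou's lemma. Given $u_n\to u$ in $H^1(\rn)$, pass to a subsequence along which $J(u_n)$ tends to $\liminf_n J(u_n)=:L$ and, refining once more, assume $u_n\to u$ a.e. By the previous paragraph $T(u_n)-\tfrac12\int G_2(u_n)\to T(u)-\tfrac12\int G_2(u)$, so that $\tfrac12\int G_1(u_n)=J(u_n)-\big(T(u_n)-\tfrac12\int G_2(u_n)\big)$ converges to $L-T(u)+\tfrac12\int G_2(u)$. Since $G_1\ge 0$ is continuous and $u_n\to u$ a.e., Fatou gives $\tfrac12\int G_1(u)\le L-T(u)+\tfrac12\int G_2(u)$, that is $J(u)\le L=\liminf_n J(u_n)$, as desired.

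The main obstacle is the subtracted term $-\tfrac12\int G_2(u)$: for it to respect lower semicontinuity it must be shown to be genuinely continuous, not merely semicontinuous, and on the whole space $\rn$ this cannot rely on a compact embedding. The key point is that, after peeling off the logarithm, $G_2$ has strictly subcritical polynomial growth, so that strong $H^1$ (hence $L^{2+\delta}$) convergence already suffices. The remaining care is purely bookkeeping: keeping the possibly infinite term $\int G_1$ separate, so that Fatou is applied to it alone while the everywhere-finite term is handled by continuity.
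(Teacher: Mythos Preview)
Your proof is correct and follows essentially the same approach as the paper: the decomposition $J=T-\tfrac12\int G_2+\tfrac12\int G_1$, the growth bound $G_2(s)\le C_\delta|s|^{2+\delta}$ together with strong $L^{2+\delta}$ convergence to obtain continuity of the $G_2$-term, and Fatou's lemma applied to the nonnegative $G_1$-term after extracting an a.e.\ convergent subsequence. Your write-up is somewhat more detailed (you invoke the Nemytskii operator and spell out the subsequence bookkeeping for the $\liminf$), but the underlying ideas coincide exactly with those in the paper.
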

\begin{proof}
Assume that $\{u_k\}\subset H^1(\rn)$ is a sequence converging to some $u$. Up to a subsequence,
$G_1(u_k)$ converges pointwise to $G_1(u)$. Hence, by virtue of Fatou's Lemma, we get
$$
\int G_1(u)\leq\liminf_k \int G_1(u_k).
$$
On the other hand, taking into account
that, for any $\delta\in(0,2^+-2]$, 
there exists 
$C_\delta>0$ such that $G_2(s)\leq C_\delta |s|^{2+\delta}$ for all $s\in\R$
and that $u_k\to u$ strongly in $L^{2+\delta}(\R^N)$, we conclude that
$$
\int G_2(u)=\lim_k \int G_2(u_k).
$$
Hence, as $G(s)=G_2(s)-G_1(s)$, the desired conclusion follows.
\end{proof}

\begin{proposition}
\label{comp-cps}
$J|_{H^1_{{\rm rad}}(\rn)}$ satisfies the Palais-Smale condition at level $c$ for every $c\in\mathbb{R}$.
\end{proposition}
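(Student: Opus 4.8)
The plan is to prove that $J$ restricted to $H^1_{\rm rad}(\rn)$ satisfies the Palais--Smale condition at every level $c\in\erre$. So I start with a Palais--Smale sequence $\{u_k\}\subset H^1_{\rm rad}(\rn)$, meaning $J(u_k)\to c$ and $|dJ|(u_k)\to 0$, and I must extract a strongly convergent subsequence in $H^1_{\rm rad}(\rn)$. The first and most crucial step is to establish boundedness of $\{u_k\}$ in $H^1(\rn)$. The natural device is to combine the energy bound $J(u_k)=O(1)$ with the slope bound applied to the test direction $v=u_k$ itself. Using Proposition~\ref{linkder-pend}, I would write $|\langle J'(u_k),u_k\rangle|\leq |dJ|(u_k)\,\|u_k\|=o(1)\|u_k\|$, where from \eqref{equat-var} one has $\langle J'(u_k),u_k\rangle=\|\nabla u_k\|_2^2+\omega\|u_k\|_2^2-\int u_k^2\log u_k^2$. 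Forming a suitable linear combination of $J(u_k)$ and $\langle J'(u_k),u_k\rangle$ designed to cancel the troublesome $\int u_k^2\log u_k^2$ term, the logarithmic part drops out and I expect to be left with a control of the form $\|u_k\|_2^2\leq C+o(1)\|u_k\|$. This yields an $L^2$-bound, and then the logarithmic Sobolev inequality \eqref{eq:logsob}, choosing the free parameter $a$ small enough that the coefficient $a^2/\pi$ of $\|\nabla u_k\|_2^2$ is absorbed into the gradient term coming from $J$, upgrades this to a full $H^1$-bound. I expect this boundedness argument, and in particular the bookkeeping in the logarithmic Sobolev estimate, to be the main obstacle.

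Once $\{u_k\}$ is bounded in $H^1_{\rm rad}(\rn)$, I pass to a subsequence converging weakly in $H^1(\rn)$ to some limit $u$, and here I exploit the radial symmetry decisively: by the compactness of the embedding $H^1_{\rm rad}(\rn)\hookrightarrow L^p(\rn)$ for $2<p<2^*$ (Strauss), I obtain strong convergence $u_k\to u$ in $L^{2+\delta}(\rn)$ for the relevant $\delta\in(0,2^*-2]$, together with pointwise a.e.\ convergence after a further subsequence. This strong subcritical convergence is exactly what controls the superlinear piece $G_2$ of the nonlinearity, just as in the proof of Proposition~\ref{semicont}.

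It remains to upgrade weak convergence in $H^1$ to strong convergence, i.e.\ to show $\|\nabla u_k\|_2\to\|\nabla u\|_2$ and $\|u_k\|_2\to\|u\|_2$. The strategy is to test the almost-vanishing slope against $v=u_k-u$. By Proposition~\ref{linkder-pend} we have $|\langle J'(u_k),u_k-u\rangle|\leq|dJ|(u_k)\,\|u_k-u\|=o(1)$, and the analogous relation $\langle J'(u),u_k-u\rangle\to0$ follows from weak convergence once $u$ is identified as a critical point. Subtracting and expanding via \eqref{equat-var}, the difference of the quadratic parts produces $\|\nabla(u_k-u)\|_2^2+\omega\|u_k-u\|_2^2$, while the nonlinear remainder $\int(g(u_k)-g(u))(u_k-u)$ must be shown to vanish. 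I would split this using $g(s)=s\log s^2$ into its subcritical contribution, which tends to zero by the strong $L^{2+\delta}$ convergence established above, and its contribution near the origin where $g$ is controlled by $|s|^{1-\delta}$ and hence again handled by lower-order convergence; the sign structure and the Brezis--Browder type argument invoked in part~\eqref{tests} of Proposition~\ref{linkder-pend} justify the integrability needed to pass to the limit. Concluding that $\|\nabla(u_k-u)\|_2^2+\omega\|u_k-u\|_2^2\to0$, for $\omega>0$ this gives $u_k\to u$ strongly in $H^1_{\rm rad}(\rn)$, which is the desired Palais--Smale condition.
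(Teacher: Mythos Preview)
Your boundedness argument is essentially the paper's (note that testing with $v=u_k$ is justified via part~\eqref{tests} of Proposition~\ref{linkder-pend}, since $(u_k^2\log u_k^2)^+\in L^1(\rn)$), and the use of radial compactness to get strong $L^{2+\delta}$ convergence is correct. The gap is in the strong-convergence step. You test with $v=u_k-u$ and claim that the nonlinear remainder $\int(g(u_k)-g(u))(u_k-u)$ tends to zero, handling the ``near the origin'' contribution by the bound $|g(s)|\leq C|s|^{1-\delta}$ and ``lower-order convergence''. But this does not work: on $\{|u_k|<1\}$ the crude estimate gives $|g(u_k)(u_k-u)|\leq C|u_k|^{1-\delta}|u_k-u|$, and controlling this in $L^1(\rn)$ via H\"older would require $\|u_k-u\|_q\to0$ for some $q<2$, which radial compactness does \emph{not} provide (it only gives $2<q<2^*$). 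In fact even the $L^1$-integrability of the cross term $g(u_k)u$ is unclear, so the hypothesis of part~\eqref{tests} is not obviously met for $v=u_k-u$. You also assume without argument that the weak limit $u$ is a critical point, which here is nontrivial because $g(u_k)$ need not be bounded in any $L^p$.

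The paper circumvents both issues. First it shows that $u$ solves~\eqref{problema} in the weak sense by testing $J'(u_k)$ against $\vartheta_R(u_k)v$ with $v\in H^1\cap L^\infty_c$ and $\vartheta_R$ a smooth cutoff in the range variable (so that $\vartheta_R(u_k)u_k\log u_k^2$ stays bounded in $L^2_{\rm loc}$), and then lets $R\to\infty$. Second, rather than subtracting, it uses only the diagonal pairings: from $\langle J'(u_k),u_k\rangle\to0$ and, testing the limit equation with $v=u$, it gets
\[
\limsup_k\big(\|\nabla u_k\|_2^2+\omega\|u_k\|_2^2\big)=\limsup_k\int u_k^2\log u_k^2\leq\int u^2\log u^2=\|\nabla u\|_2^2+\omega\|u\|_2^2,
\]
where the middle inequality follows exactly as in Proposition~\ref{semicont} (Fatou for $G_1$, strong $L^{2+\delta}$ convergence for $G_2$). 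This yields norm convergence without ever needing the problematic cross terms.
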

\begin{proof}
Let us first prove that the Palais-Smale sequences of $J$ are bounded in $H^1(\rn)$.
Let $\{u_k\}\subset H^1(\rn)$ be a Palais-Smale sequence of $J$, namely
$J(u_k)\to c$ and $|dJ|(u_k) \to 0$.
By Proposition~\ref{linkder-pend}, we have that $\langle J'(u_k),v\rangle=o(1)\|v\|$
for any $v\in H^1(\rn) \cap L^\infty_c(\rn)$,
namely $J'(u_k)\to 0$ in $H^{-1}(\rn)$ as $k\to\infty$.
Now, notice that, if $u\in H^1(\rn)$, then $(u^2\log u^2)^+\in L^1(\rn)$.
Thus, by virtue of \eqref{tests} of Proposition~\ref{linkder-pend}, we are allowed to choose $u_k$ as  admissible test functions
in equation~\eqref{equat-var} and
\begin{equation}
\label{eq:boundL2}
\| u_k\|_2^2 = 2 J(u_k) - \langle J'(u_k),u_k\rangle
\leq 2c+o(1) \|u_k\|.
\end{equation}
By \eqref{eq:logsob} for $a>0$ small, \eqref{eq:boundL2} and the boundedness of $\{J(u_k)\}$, for $\delta>0$ small, we have that
\[
\|u_k\|^2 \leq C+ C(1+ o(1)\|u_k\|)^{1+\delta} + o(1)\|u_k\|,
\]
and so $\{u_k\}$ is bounded in $H^1(\rn)$.
\noindent
Let $\{u_k\}$ now be a Palais-Smale sequence for $J$ in $H^1_{\rm rad}(\rn)$. 
The above argument shows that $\{u_k\}$ is bounded in $H^1_{\rm rad}(\rn)$.
Then, up to a subsequence, there is $u\in H_{\rm rad}^1(\rn)$ with
\[
u_k \rightharpoonup u  \hbox{ in } H^1(\rn),   
\qquad
u_k \to u  \hbox{ in } L^p(\rn),\;2<p<2^*,
\qquad
u_k \to u  \hbox{ a.e. in } \rn.  
\]
We want to prove that
\begin{equation}
\label{solprobl}
\int\nabla u \cdot\nabla v
+\omega\int u v
=\int u v \log u^2,\quad \hbox{for all } v\in H^1(\rn) \cap L^\infty_c(\rn).
\end{equation}
So, fixed $v\in H^1(\rn) \cap L^\infty_c(\rn)$, let us consider 
$\vartheta_R(u_k)v$,
where, given $R>0$, $\vartheta_R:\mathbb{R}\to[0,1]$ is smooth, $\vartheta_R(s)=1$ for $|s|\leq R$,  $\vartheta_R(s)=0$ for $|s|\geq 2R$ and $|\vartheta'_R(s)|\leq C/R$ in $\mathbb{R}$.
Obviously we have that $\vartheta_R(u_k)v\in H^1(\rn) \cap L^\infty_c(\rn)$.
Thus, by~\eqref{equat-var} and   
taking into account the boundeness of $\{u_k\}$, we have
\[
\left| \int \vartheta_R(u_k) \nabla u_k \nabla v
+\omega \int \vartheta_R(u_k) u_k v
-\int \vartheta_R(u_k) u_k v \log u_k^2 
- \langle J'(u_k),\vartheta_R(u_k)v\rangle\right|
\leq \frac{C}{R}.
\]
Passing to the limit as $k\to+\infty$, since
$\vartheta_R(u_k)\nabla v \to \vartheta_R(u)\nabla v$ in $L^2(\rn,\rn)$, $ \vartheta_R(u_k) u_k \log u_k^2 \to \vartheta_R(u) u \log u^2$
a.e. in $\rn$ and taking into account
that $\{\vartheta_R(u_k) u_k \log u_k^2 \}$ 
is bounded in $L^2_{{\rm loc}}(\rn)$, 
we have
\[
\left| \int \vartheta_R(u) \nabla u \nabla v
+\omega \int \vartheta_R(u) u v
-\int \vartheta_R(u) u v \log u^2 \right|
\leq \frac{C}{R}.
\]
Thus we pass to the limit as $R\to +\infty$ and we get~\eqref{solprobl}.
Moreover, as in the proof of Proposition \ref{semicont}, we have that
$$
\limsup_k \int u_k^2\log u_k^2\leq \int u^2\log u^2.
$$
Hence, since $\langle J'(u_k),u_k \rangle \to 0$ and choosing, by 
\eqref{tests} of Proposition~\ref{linkder-pend},  $v=u$ 
in~\eqref{solprobl}, we get
\[
\limsup_k (\|\nabla u_k\|_2^2 +\omega \|u_k\|_2^2)
=\limsup_k \int u_k^2 \log u_k^2\leq \int u^2\log u^2
= \|\nabla u\|_2^2 +\omega \|u\|_2^2,
\]
which implies the convergence of $u_k\to u$ in $H^1_{\rm rad}(\rn)$.
\end{proof}

\subsection{Proof of Theorem~\ref{main}}
\noindent
To prove the existence of sequence $\{u_k\}\subset H^1_{\rm rad}(\rn)$
of (weak) solutions to \eqref{problema} with $J(u_k)\to+\infty$,
we will apply Theorem~\ref{astratto} with $X=H^1_{\rm rad}(\rn)$.
In light of Proposition~\ref{comp-cps}, $J$ satisfies 
the Palais-Smale condition. Moreover J satisfies (epi)$_c$ and (\ref{it:WSH}) conditions (see \cite[Theorem 3.4]{DegZan2000}). Hence, 
it remains to check that $J$ satisfies also the 
geometrical assumptions. Obviously, $J(0)=0$. 
Moreover by the logarithmic Sobolev inequality~\eqref{eq:logsob}, we have that
\[
J(u) \geq  
\frac{1}{2}\left(1 - \frac{a^2}{\pi}\right) \|\nabla u\|_2^2
+ \frac{1}{2} (\omega + 1 + n (1+\log a) - \log \| u\|_2^2)
\| u\|_2^2\\
\geq  c \| u\|^2,
\]
for a suitable $a$ and  if $\|u\|$ are sufficiently small.
Then, if we take $Z=X=H^1_{\rm rad}(\rn)$ and $V_0=\{0\}$ we have (\ref{it:G1}).
Finally, let us consider a strictly increasing 
sequence $\{V_k\}$ of finite-dimensional 
subspaces of $H^1_{{\rm rad}}(\rn)$ constituted 
by bounded functions (for instance, one
can consider the eigenvectors of $-\Delta+|x|^2$, 
see \cite[Chapter 3]{BS}).
Since any norm is equivalent on any $V_k$, if $\{u_m\}\subset V_k$ is such 
that $\|u_m\|\to +\infty$, then also $\mu_m=\|u_m\|_2 \to +\infty$. Write
now $u_m=\mu_m w_m$, where $w_m=\|u_m\|_2^{-1}u_m$. 
Thus $\|w_m\|_2=1$, $\|\nabla w_m\|_2\leq C$
and $\|w_m\|_\infty\leq C$, yielding in turn
\[
J(u_m)
=\frac{\mu_m^2}{2}  \Big(  \|\nabla w_m\|_2^2 
+ \omega + 1 - \log \mu_m^2
- \int w_m^2 \log w_m^2 \Big)  
\leq  \frac{\mu_m^2}{2} (C - \log \mu_m^2)\to-\infty.
\]
Thus, there exist $\{R_k\}\subset]\rho,+\infty[$ 
such that for $u\in V_k$ with $\|u\| \geq R_k$, $J(u)\leq 0$.
Hence, also~\eqref{it:G2} is satisfied and the assertion follows
as, by Proposition~\ref{linkder-pend}, the critical points of 
$J$ in the sense of weak slope correspond to solutions to~\eqref{problema}.

\section{Qualitative properties of the nonnegative solutions}
\label{solprop}

\subsection{Positivity and regularity of solutions}
If we take $\beta(s)=\omega s-s\log s^2$, since $\beta$ is continuous, 
nondecreasing for $s$ small, $\beta(0)=0$ and $\beta(\sqrt{e^\omega})=0$, 
by \cite[Theorem 1]{Vazquez} we have that each solution $u\geq 0$ 
of \eqref{problema} such that $u\in L^1_{\rm loc}(\rn)$ and $\Delta u \in L^1_{\rm loc}(\rn)$ 
in the sense of distribution, is either trivial or stricly positive.
Moreover, observe that any given nonnegative solution to 
equation~\eqref{problema} satisfies the inequality
$$
-\Delta u+\omega u\leq (u \log u^2)^+.
$$
In particular, for any $\delta>0$, there exists $C_\delta>0$ such that 
$$
-\Delta u\leq \ell(u),\qquad \ell(s)=-\omega s+C_\delta s^{1+\delta}
$$
Since we have $|\ell(s)|\leq C(1+|s|^{1+\delta})$ for 
all $s\in{\mathbb R}$ and some $C>0$,
by repeating the argument of the proof of \cite[Lemma B.3]{struwe}, 
it is possible to prove that $u\in L^q_{{\rm loc}}({\mathbb R}^n)$
for every $q<\infty$. Then, by standard regularity arguments, 
the $C^2$ smoothness of $u$ readily follows.

\subsection{Uniqueness of positive solutions}

\noindent
In this subsection we prove Theorem \ref{thm:mpm}.  prove the following
\begin{proof}
First of all, by means of the moving plane method \cite{GNN}, we prove that each positive and vanishing 
classical solution to \eqref{problema} has to be radially symmetric about some point.
Let $u\in C^2({\mathbb R}^n)$ be a solution  to 
equation \eqref{problema} with $u>0$ and  $u(x)\to 0$ as $|x|\to \infty$, $\lambda\in\mathbb{R}$, $\Sigma_\lambda:=\{x\in\rn\vert\,\, x_1<\lambda\}$,
$x_\lambda:=(2\lambda-x_1,x_2,\ldots,x_n)$, $u_\lambda(x):=u(x_\lambda)$ 
and $w_\lambda:=u_\lambda-u$. Then, it is easy to verify that
\[
-\Delta w_\lambda + c_\lambda(x) w_\lambda =0,
\]
where we have set
\[
c_\lambda(x):=-\int_0^1 \left(2-\omega+\log(\sigma u_\lambda(x) 
+ (1-\sigma) u(x))^2\right) d\sigma.
\]
Notice that $x\mapsto c_\lambda(x)$ is possibly unbounded 
from above, but it is bounded from below.
Since $u$ goes to zero at infinity we notice that there 
exists $R>0$ such that $u(x)< \sqrt{e^{\omega-2}}$ in $B^c_R(0)$. 
We claim that for every $\lambda\in\mathbb{R}$ we have 
that $w_\lambda \geq 0$ in $B^c_R(0)$.
Indeed, assume by contradiction that there exist $\lambda\in\mathbb{R}$ 
and points in ${B}^c_R(0)$ at which $w_\lambda <0$. 
Let $\bar x\in B^c_R(0)$ a negative minimum point of 
$w_\lambda$.  Then, we have
\[
0<-(2-\omega+\log u^2 (\bar x))  \leq c_\lambda(\bar x) \leq 
-(2-\omega+\log u_\lambda^2 (\bar x)),
\]
and so $-\Delta w_\lambda (\bar x) \geq 0$ that is a contradiction.
thus, if $\lambda <-R$, we have that $\Sigma_\lambda\subset B^c_R(0)$ 
and then $w_\lambda(x)\geq 0$ for every $x\in\Sigma_\lambda$. 
Now we want to move the hyperplane $\partial \Sigma_\lambda$ to the 
right (i.e.\ increasing the value of $\lambda$) preserving the 
inequality $w_\lambda\geq 0$ to the limit position. 
Let $\lambda_0 := \sup\{\lambda < 0 \,\,\vert\,\, w_\lambda\geq 0 
\hbox{ in } \Sigma_\lambda \}$. 
First of all we observe that, by continuity,
$w_{\lambda_0} \geq 0$ in $\Sigma_{\lambda_0}$. 
Then by the maximum principle (see \cite[Theorem 7.3.3]{chenli}, 
where one can assume that the function $c(x)$ is merely bounded from below), we have that either 
$w_{\lambda_0}\equiv 0$ in $\Sigma_{\lambda_0}$ or $w_{\lambda_0}>0$ 
in the interior of $\Sigma_{\lambda_0}$. We claim that if $\lambda_0 <0$ 
then $w_{\lambda_0}\equiv 0$. We show that $w_{\lambda_0}>0$ in the 
interior of $\Sigma_{\lambda_0}$ implies that
\begin{equation}
\label{eq:mov}
\exists \delta_0>0 \hbox{ such that } \forall\delta\in(0,\delta_0): 
w_{\lambda_0+\delta}\geq 0 \hbox{ in }\Sigma_{\lambda_0+\delta},
\end{equation} 
violating the definition of $\lambda_0$. Indeed, assume by contradiction 
that \eqref{eq:mov} is not true. Then we can consider a 
sequence $\delta_k \to 0$ such that for every $k$ there exists 
a negative minimum point $\bar{x}_k$ of $w_{\lambda_0+\delta_k}$ 
in $\Sigma_{\lambda_0+\delta_k}$.
Then $\bar{x}_k \in \bar{B}_R(0)\cap\Sigma_{\lambda_0+\delta_k}$ 
and $\nabla w_{\lambda_0+\delta_k} (\bar{x}_k)=0$. The boundedness 
of the sequence $\{\bar{x}_k\}$ implies that, up to a subsequence,
$\bar{x}_k\to\bar{x}$ and 
\begin{equation}
\label{eq:mov2}
w_{\lambda_0} (\bar{x})=\lim_k w_{\lambda_0+\delta_k} (\bar{x}_k) \leq 0,
\qquad
\bar{x}\in \overline{\Sigma}_{\lambda_0}
\end{equation}
and
\begin{equation}
\label{eq:mov3}
\nabla w_{\lambda_0} (\bar{x})=\lim_k \nabla w_{\lambda_0+\delta_k} (\bar{x}_k)=0.
\end{equation}
Then, by \eqref{eq:mov2} we have that $\bar{x}\in \partial\Sigma_{\lambda_0}$ 
and $w_{\lambda_0} (\bar{x})=0$. Therefore, by Hopf  Lemma (see again \cite[Theorem 7.3.3]{chenli}) we have that
\[
\frac{\partial w_{\lambda_0}}{\partial {\bf n}}(\bar{x})<0
\]
that contradicts \eqref{eq:mov3}.
If $\lambda_0 = 0$, then we can carry out the above procedure in 
the opposite direction, namely, we move the hyperplane $x_1=\lambda$ 
with $\lambda>0$ in the negative  direction. 
If the infimum of values of such $\lambda$'s is strictly positive, we 
get the symmetry and monotonicity as in the case $\lambda_0<0$. 
If such infimum is $0$ we get obviously the symmetry and monotonicity 
with respect to the hyperplane $x_1=0$. By the arbitrariness of  the choice of the $x_1$ direction, we can  conclude that the solution \(u\) must 
be radially symmetric about some point.
Finally, \cite{serrintang} prove that there exists at most one non-negative non-trivial $C^1$ distribution solution of \eqref{problema} in the class of radial functions which tends to zero at infinity. 
Then, up to translations,  such a solution is $\mathfrak{g}$.
\end{proof}

\subsection{Gausson's nondegeneracy} 
We have shown that $\mathfrak{g}$ is the unique radial positive  solution of the equation
\begin{equation}
\label{eq:gausson}
-\Delta u  - n u = u \log u^2.
\end{equation}
In this subsection we prove Theorem \ref{main2}.
The linearized operator $L$ for \eqref{eq:gausson} at $\mathfrak{g}$ is found to be
\[
Lu=-\Delta u + ( |x|^2 -n-2) u,
\]
acting on $L^2(\rn)$ with domain $H^2(\rn)$.
To prove that $\operatorname{Ker}(L)=\operatorname{span}\big\{\partial_{x_i}\mathfrak{g}\big\}$,
we introduce the following notations. We set
\[
r=|x|, \qquad \vartheta=\frac{x}{|x|}\in \mathbb{S}^{n-1},
\]
and we denote by $\Delta_r$ the Laplace operator in radial coordinates and with $\Delta_{\mathbb{S}^{n-1}}$ the Laplace-Beltrami operator.
Let us consider the spherical harmonics $Y_{k,h}(\vartheta)$, satisfying
\begin{equation}
\label{eq:hs}
-\Delta_{\mathbb{S}^{n-1}} Y_{k,h}=\lambda_k Y_{k,h}.
\end{equation}
We recall that \eqref{eq:hs} admits a sequence of eigenvalues 
$\lambda_k= k(k+n-2)$, $k\in\mathbb{N}$ whose multiplicity is given by $\mu_k - \mu_{k-2}$ 
where 
\[
\mu_k:=
\begin{cases}
\frac{(n+k-1)!}{(n-1)!k!} & \hbox{ for }k\geq 0,\\
0 & \hbox{ for }k<0,
\end{cases}
\]
(see e.g. \cite{BS}). In particular $\lambda_0=0$ and $\lambda_1=n-1$  have, respectively, multiplicity $1$ and  $n$.
For every $u\in H^1(\rn)$, we have that
\begin{equation}
\label{eq:scomp}
u(x)=\sum_{k\in\mathbb{N}} \sum_{h=1}^{\mu_k-\mu_{k-2}} \psi_{k,h}(r) Y_{k,h}(\vartheta),
\quad\hbox{ where }
\psi_{k,h}(r):=\int_{\mathbb{S}^{n-1}} u(r\vartheta) Y_{k,h}(\vartheta) d\vartheta,
\end{equation}
and, for every $k\in\mathbb{N}$ and $h\in\{1,\ldots,\mu_k-\mu_{k-2}\}$,
\begin{equation}
\label{eq:deltak}
\Delta(\psi_{k,h} Y_{k,h})= Y_{k,h}(\vartheta) \Delta_r \psi_{k,h} (r)+ \frac{1}{r^2} \psi_{k,h}(r) \Delta_{\mathbb{S}^{n-1}} Y_{k,h}(\vartheta).
\end{equation}
Therefore, by combining formulas \eqref{eq:hs}, \eqref{eq:scomp} and \eqref{eq:deltak}, we have that $u\in \operatorname{Ker}(L)$ 
if and only if, for every $k\in\mathbb{N}$ and all $h\in\{1,\ldots,\mu_k-\mu_{k-2}\}$,
\begin{equation}
\label{eq:ndeg}
A_k(\psi_{k,h})=0,
\end{equation}
where
\[
A_k(\psi)=-\psi'' - \frac{n-1}{r} \psi' + \Big( r^2+ \frac{\lambda_k}{r^2} - n -2\Big) \psi .
\]
For the spectral properties of this kind of operators we refer the reader to
\cite{BS}.
Now, as usual in this kind of proofs (see e.g. \cite{AM,Ceta}), we proceed by showing the following three claims:
\begin{enumerate}[label={\em(Claim \arabic*)},ref=Claim \arabic*]
{\indentitem
\item \label{it:cl1}  for $k=0$, the equation \eqref{eq:ndeg}  has only the trivial solution in $H^1(\mathbb{R}_+)$;}
{\indentitem
\item \label{it:cl2} for $k=1$, the solutions of \eqref{eq:ndeg}  in
$H^1(\mathbb{R}_+)$ are of the form $c\mathfrak{g}'$, for $c\in\mathbb{R}$;}
{\indentitem
\item \label{it:cl3}  for $k\geq 2$, the equation \eqref{eq:ndeg}  has only the trivial solution in $H^1(\mathbb{R}_+)$.}
\end{enumerate}
{\em Proof of \ref{it:cl1}}. Let $k=0$ and $\psi_0\in H^1(\mathbb{R}_+)$ be a nonzero solution of \eqref{eq:ndeg}. 
The relation $A_0(\mathfrak{g})= - 2 \mathfrak{g}$ and the positivity of $\mathfrak{g}$ imply that the Gausson is the first eigenfunction and then  $\psi_0$ has to change sign.
Thus, by Sturm-Liouville theory $\psi_0$ is unbounded and we get the contradiction.\\
{\em Proof of \ref{it:cl2}}.
First notice that an easy calculation shows that $A_1(\mathfrak{g}')=0$ and 
$\mathfrak{g}'\in H^1(\mathbb{R}_+)$. If we look for a second solution of 
the equation $A_1(\psi)=0$ in the form $\psi(r)=c(r)\mathfrak{g}'(r)$ we have that the function $c$ has to satisfy
\[
r c'' + (n+1-2r^2)c'=0,
\]
and then, in turn,
\[
c(r)=c_1 \Phi(r) + c_2,
\quad
\hbox{ where $\Phi$ is primitive of }
r\mapsto r^{-n-1}e^{r^2}.
\]
Then $c(r)\mathfrak{g}'(r)\to +\infty$ as $r\to +\infty$ if $c_1\neq 0$ and 
this implies that the unique possible choice to get solutions 
of the form $c(r)\mathfrak{g}'(r)$ is to take $c(r)$ constant, proving the claim.\\
{\em Proof of \ref{it:cl3}}.
Since $\lambda_k=\lambda_1+\delta_k$ with $\delta_k>0$ and, by \ref{it:cl2}, the operator $A_1$ is a non-negative operator, then, if $k\geq 2$, $A_k=A_1+\frac{\delta_k}{r^2}$ is a positive operator and so $A_k(\psi)=0$ implies that $\psi=0$. Thus for every $k\geq 2$ and $h\in\{1,\ldots,\mu_k-\mu_{k-2}\}$, we have that $\psi_{k,h}=0$. \\
\noindent
Finally we can conclude observing that, summarizing the previous results, we have that
\[
\operatorname{Ker}(L)
=\operatorname{span}\left\{\mathfrak{g}'Y_{1,h}\right\}
=\operatorname{span}\left\{\partial_{x_h}\mathfrak{g}\right\}.
\]

\subsection{Minimization on $L^2$-spheres}
Let $J$ be as in~\eqref{functional}, $W$ as in \eqref{Wcaz}  and set
\begin{align*}
\mathcal{M}_\nu &:=\big\{ u\in W
\,\vert\, \|u\|^2_2=\nu \big\}, \\
\mathcal{N}_\omega &:=\Big\{ u\in W \setminus \{ 0 \}
\,\vert\, \|\nabla u\|^2_2
+\omega\|u\|_2^2=\int u^2 \log u^2
\Big\}, \\
{E}(u)&:=\frac{1}{2}\|\nabla u\|_2^2-\frac{1}{2}\int u^2 
\log u^2.
\end{align*}
We recall that the functionals $J$ and $E$ are of class $C^1$ in $W$ (see \cite{cazNA}).
We say that a {\em ground state solution} $u$ of 
\eqref{problema} is a solution of the following 
minimization problem
\begin{equation}\label{nehari}
J(u)=m_\omega =\inf_{\mathcal{N}_\omega} J.
\end{equation}
We also set
\begin{equation}
\label{ldue}
c_\nu=\inf_{\mathcal{M}_\nu} E.
\end{equation}
Consider now the sequence $\{u_k\}$ of solutions of \eqref{problema} found in Theorem \ref{main}. Proceeding as in  \cite{JJtanak} we have that the first element $u_1$ of such sequence  solves problem \eqref{nehari}. 
Furthermore, by \cite{BJM},
it follows that $u_1$ has a fixed sign. Then $u_1$ is the Gausson for \eqref{problema} and it belongs to $W$.
We have the following property that can be useful in the study of the orbital stability of ground states for the equation \eqref{sch-evolv}.

\begin{theorem}
\label{biject}
For every $\omega\in{\mathbb R},$ we have
\(
\inf_{\mathcal{N}_\omega} J
=\inf_{\mathcal{M}_{2m_\omega}} J.
\)
\end{theorem}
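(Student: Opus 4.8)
The plan is to collapse both infima onto the single quantity $m_\omega=\inf_{\mathcal{N}_\omega}J$ by combining two elementary identities with a fibering (scaling) argument. First I would record that on the Nehari manifold the functional reduces to the $L^2$-norm: if $u\in\mathcal{N}_\omega$ then $\|\nabla u\|_2^2+\omega\|u\|_2^2=\int u^2\log u^2$, and inserting this constraint into \eqref{functional} gives
\[
J(u)=\tfrac{1}{2}\|u\|_2^2,\qquad u\in\mathcal{N}_\omega .
\]
In particular $m_\omega=\inf_{\mathcal{N}_\omega}\tfrac12\|u\|_2^2>0$, and since (as recalled just before the statement) the minimum is attained at the ground state $u_1$, we have $\|u_1\|_2^2=2J(u_1)=2m_\omega$, i.e.\ $u_1\in\mathcal{M}_{2m_\omega}$. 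This already yields one inequality, $\inf_{\mathcal{M}_{2m_\omega}}J\leq J(u_1)=m_\omega$.

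For the reverse inequality I would use the scaling identity obtained by a direct computation from \eqref{functional},
\[
J(tu)=t^2J(u)-\frac{t^2\log t^2}{2}\,\|u\|_2^2,\qquad t>0,\ u\in W .
\]
Imposing $\langle J'(tu),tu\rangle=0$ shows that for each $u\in W\setminus\{0\}$ there is a unique $t=t(u)>0$ with $tu\in\mathcal{N}_\omega$, determined by $\log t^2=\|u\|_2^{-2}\bigl(\|\nabla u\|_2^2+\omega\|u\|_2^2-\int u^2\log u^2\bigr)$; note $tu\in W$ because $W$ is invariant under multiplication by scalars. Then I would take an arbitrary $u\in\mathcal{M}_{2m_\omega}$, set $t=t(u)$, and observe that since $tu\in\mathcal{N}_\omega$ the first identity gives $J(tu)=\tfrac12\|tu\|_2^2=t^2m_\omega$, while $J(tu)\geq m_\omega$ forces $t^2\geq 1$. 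Feeding $\|u\|_2^2=2m_\omega$ and $J(tu)=t^2m_\omega$ into the scaling identity and dividing by $t^2$ produces
\[
J(u)=m_\omega\bigl(1+\log t^2\bigr).
\]
Since $t\geq 1$ and $m_\omega>0$, this gives $J(u)\geq m_\omega$; taking the infimum over $\mathcal{M}_{2m_\omega}$ yields $\inf_{\mathcal{M}_{2m_\omega}}J\geq m_\omega$, and together with the first step the equality $\inf_{\mathcal{N}_\omega}J=\inf_{\mathcal{M}_{2m_\omega}}J$ follows.

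The computations here are routine; the points that genuinely need care are the strict positivity $m_\omega>0$ (which is exactly what turns $t\geq 1$, hence $\log t^2\geq 0$, into the decisive sign) and the fact that the Nehari projection $t(u)$ is well defined and lands back in $W$ for every $u\in\mathcal{M}_{2m_\omega}$. I would also note for completeness the parallel bookkeeping $J|_{\mathcal{M}_\nu}=E+\tfrac{\omega+1}{2}\nu$, so that on the $L^2$-sphere one has $\inf_{\mathcal{M}_\nu}J=c_\nu+\tfrac{\omega+1}{2}\nu$; with $\nu=2m_\omega$ the theorem is then equivalent to the clean relation $c_{2m_\omega}=-\omega\,m_\omega$ between the sphere-constrained energy and $m_\omega$, which is the form most likely to feed the stability analysis mentioned after the statement.
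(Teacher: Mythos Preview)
Your argument is correct and, in fact, cleaner than the route taken in the paper. The paper deduces the theorem from the preceding Lemma, whose proof uses the Pohozaev identity together with the Nehari constraint to obtain the full action repartition $\|u\|_2^2=2m$, $\|\nabla u\|_2^2=nm$, $\int u^2\log u^2=(2\omega+n)m$ for a \emph{critical point} $u$ at level $m$, then rescales $u\mapsto \mu u$ to land on $\mathcal{M}_\nu$ and reads off the relation $c=\tfrac{\nu}{2}(\log\tfrac{2m}{\nu}-\omega)$; the Theorem then falls out by taking $\nu=2m_\omega$, which gives $c_{2m_\omega}=-\omega m_\omega$, exactly the bookkeeping you record at the end.

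Your approach is genuinely different in two respects. First, you never invoke Pohozaev: the only structural fact you use on $\mathcal{N}_\omega$ is the elementary identity $J(u)=\tfrac12\|u\|_2^2$. Second, and more importantly, your reverse inequality is proved for \emph{every} $u\in\mathcal{M}_{2m_\omega}$ via the explicit Nehari fibering $t(u)$ and the scaling identity $J(tu)=t^2J(u)-\tfrac{t^2\log t^2}{2}\|u\|_2^2$, whereas the paper's Lemma only matches critical levels to critical levels and then specializes to the minimum. Your route therefore bypasses the (tacit) issue of whether $\inf_{\mathcal{M}_{2m_\omega}}J$ is itself a critical level, and it isolates precisely the two inputs that do the work: $m_\omega>0$ and the well-definedness of the projection $u\mapsto t(u)u$ within $W$. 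What the paper's approach buys in exchange is the broader correspondence \eqref{mappatura} between all critical levels on $\mathcal{N}_\omega$ and on $\mathcal{M}_\nu$, not just the ground-state level.
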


\noindent
This result follows a particular case of the following
\begin{lemma}
The critical levels of $J$ on $\mathcal{N}_\omega$ are one-to-one with  the critical levels of $E$ on $\mathcal{M}_\nu$.
\end{lemma}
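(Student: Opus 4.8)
The plan is to use the scaling structure of \eqref{problema} to build an explicit bijection between the constrained critical points of $J$ on $\mathcal{N}_\omega$ and those of $E$ on $\mathcal{M}_\nu$, and then to check that the induced map on critical \emph{levels} is a strictly monotone, hence bijective, function of one real variable. Two algebraic identities organize everything. Since $J(u)=E(u)+\frac{\omega+1}{2}\|u\|_2^2$, on a fixed sphere $\mathcal{M}_\nu$ the functionals $J$ and $E$ differ only by the constant $\frac{\omega+1}{2}\nu$, so they share the same constrained critical points and their levels differ by that constant. On $\mathcal{N}_\omega$, substituting the defining constraint $\|\nabla u\|_2^2+\omega\|u\|_2^2=\int u^2\log u^2$ into \eqref{functional} gives the clean relation $J(u)=\frac12\|u\|_2^2$.

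First I would identify both families of critical points with solutions of \eqref{problema}. That $\mathcal{N}_\omega$ is a natural constraint is a short computation: writing $N(u)=\langle J'(u),u\rangle$, one finds $\langle N'(u),u\rangle=-2\|u\|_2^2\neq0$ on $\mathcal{N}_\omega$, so testing the Lagrange relation $J'(u)=\theta N'(u)$ against $u$ forces $\theta=0$; hence the critical points of $J|_{\mathcal{N}_\omega}$ are exactly the nonzero solutions of $-\Delta u+\omega u=u\log u^2$. On the other side, a critical point $w$ of $E|_{\mathcal{M}_\nu}$ satisfies, by the Lagrange multiplier rule, $-\Delta w+\omega'' w=w\log w^2$ for a parameter $\omega''$ fixed by the multiplier and possibly depending on the critical point.

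Next I would invoke the scaling recalled in the introduction: if $u$ solves \eqref{problema} with parameter $\omega$, then $\lambda u$ solves it with parameter $\omega+\log\lambda^2$. Given a critical point $u$ of $J|_{\mathcal{N}_\omega}$ at level $c=\frac12\|u\|_2^2>0$, set $v=\lambda u$ with $\lambda^2=\nu/(2c)$, so that $\|v\|_2^2=\nu$; then $v$ solves \eqref{problema} with the shifted parameter and is therefore a critical point of $E|_{\mathcal{M}_\nu}$. Rescaling a critical point of $E|_{\mathcal{M}_\nu}$ back onto $\mathcal{N}_\omega$ gives the inverse map, so the correspondence is a bijection between the two sets of critical points.

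Finally, using $E(u)=J(u)-\frac{\omega+1}{2}\|u\|_2^2=-\omega c$ and $\lambda^2=\nu/(2c)$, a direct computation yields the level relation
\[
E(v)=-\frac{\nu}{2}\Big(\omega+\log\nu-\log 2c\Big),
\]
whose derivative in $c$ is $\nu/(2c)>0$; thus $c\mapsto E(v)$ is a strictly increasing bijection of $(0,+\infty)$ onto $\mathbb{R}$, which is exactly the asserted one-to-one correspondence of critical levels. Theorem~\ref{biject} then follows by specializing to the ground-state level $c=m_\omega$ with $\nu=2m_\omega$, where the correspondence gives $\inf_{\mathcal{M}_{2m_\omega}}E=-\omega m_\omega$ and hence $\inf_{\mathcal{M}_{2m_\omega}}J=m_\omega$. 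The main obstacle I anticipate is the careful verification that this is a bijection of critical points and not merely of values: one must check that the rescaling preserves membership in $W$ together with the $C^1$ Lagrange-multiplier structure on both constraints, and in particular that every multiplier $\omega''$ occurring on $\mathcal{M}_\nu$ is reached from $\mathcal{N}_\omega$ by the scaling, so that surjectivity holds; once the natural-constraint property and scaling invariance are secured, the monotone level identity is routine.
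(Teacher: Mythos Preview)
Your proposal is correct and follows essentially the same route as the paper: both rescale a critical point $u$ of $J|_{\mathcal{N}_\omega}$ by $u\mapsto\mu u$ to land on $\mathcal{M}_\nu$, then compute the resulting $E$-level, arriving at the same formula $E(v)=\frac{\nu}{2}\big(\log\frac{2m}{\nu}-\omega\big)$. The only differences are cosmetic: the paper invokes the Pohozaev identity to compute $\|\nabla u\|_2^2$ and $\int u^2\log u^2$ separately (though the $n$-dependent terms cancel in the end), whereas you bypass this via the cleaner observation $E(u)=J(u)-\frac{\omega+1}{2}\|u\|_2^2=-\omega c$ on $\mathcal{N}_\omega$; and you are more explicit about bijectivity, supplying the inverse map and the monotonicity of the level correspondence, which the paper leaves implicit.
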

\begin{proof} 
Let $m$ be a critical level of $J$ on $\mathcal{N}_\omega$. We prove 
that $m$ uniquely detects a critical level $c$ of $E$ on $\mathcal{M}_\nu$ and
we have
\begin{equation}
\label{mappatura}
c= \dfrac{\nu}{2}\Big(
\log\dfrac{2m}{\nu} -\omega \Big).
\end{equation}
Let $u$ be a constrained critical point of
$J$ on the manifold $\mathcal{N}_\omega$ such that
$J(u)=m$, then 
$$
\|\nabla u\|_2^2 +(\omega+1)\|u\|_2^2
=\int u^2 \log u^2 + 2 m.
$$
Moreover
$$
\|\nabla u\|^2_2
+\omega\|u\|_2^2=\int u^2 \log u^2
$$
and, by the Pohozaev identity we have 
\begin{equation*}
\frac{n-2}{n}\|\nabla u\|^2_2
+(\omega+1)\|u\|_2^2=
\int u^2 \log u^2.
\end{equation*}
The three identities above give the following
action ripartition
\begin{equation*}
\|u\|^2_2=2m,\qquad
\|\nabla u\|^2_2 = n m,\qquad
\int u^2 \log u^2=(2\omega+n)m.
\end{equation*}
Let us consider $u_{\mu}(x)=\mu u(x)$ with $\mu\in{\mathbb R}^*$.
We notice that $u_\mu$ solves
\[
-\Delta u + (\omega+\log\mu^2) u = u \log u^2.
\]
Moreover $u_{\mu}\in  \mathcal{M}_\nu$ if $2\mu^2 m=\nu$ and then we obtain~\eqref{mappatura}
concluding the proof.
\end{proof}

\begin{proof}[Proof of Theorem~\ref{biject}]
For the minimization problems \eqref{nehari} and
\eqref{ldue}, choosing $\nu=2m_\omega$, it follows
$c_\nu=-\omega \nu/2$,
which yields immediately the last assertion.
\end{proof}

\medskip

\subsection*{Acknowledgments} 
The authors would like to thank Marco Degiovanni and Tobias Weth for providing helpful comments 
about the paper.

\bigskip
\medskip

\end{document}